\documentclass[11pt]{amsart}
\usepackage{amssymb,amsmath,amsthm,amsfonts,amsopn,url,bbm, hyperref}
\usepackage{enumitem}
\theoremstyle{plain}
\newtheorem{thm}{Theorem}[section]

\newtheorem{prop}[thm]{Proposition}
\newtheorem{lemma}[thm]{Lemma}
\newtheorem{cor}[thm]{Corollary}

\theoremstyle{definition}
\newtheorem{defn}[thm]{Definition}
\newtheorem*{defn*}{Definition}
\newtheorem{example}[thm]{Example}
\newtheorem*{example*}{Example}

\theoremstyle{remark}
\newtheorem{rmk}[thm]{Remark}
\newtheorem*{rmk*}{Remark}
\newcommand{\field}[1]{\mathbbm{#1}}

\newcommand{\N}{\field{N}}

\newcommand{\ideal}[1]{\mathfrak{#1}}

\newcommand{\p}{\ideal{p}}
\newcommand{\q}{\ideal{q}}
\newcommand{\ia}{\ideal{a}}

\newcommand{\cal}{\mathcal}

\newcommand{\cF}{\cal{F}}
\newcommand{\cf}{\cal{F}_\rf}

\newcommand{\cI}{\cal{I}}

\newcommand{\ra}{\rightarrow}

\newcommand{\rb}{{\rm b}}
\newcommand{\rc}{{\rm c}}

\newcommand{\rd}{{\rm d}}
\newcommand{\rf}{{\rm f}}
\newcommand{\rg}{{\rm g}}

\newcommand{\rs}{{\rm s}}

\DeclareMathOperator{\id}{\cI}
\DeclareMathOperator{\idf}{\cI_\rf}
\DeclareMathOperator{\ffid}{\cf}
\DeclareMathOperator{\fid}{\cF}
\DeclareMathOperator{\fsub}{\overline{\fid}}
\DeclareMathOperator{\nzd}{nzd}
\newcommand{\sig}{{\sigma}}
\newcommand{\sigf}{{\sigma_\rf}}
\newcommand{\rad}{\mathrm{rad}}

\author{Neil Epstein}
\address{Department of Mathematical Sciences \\ George Mason University \\ Fairfax, VA  22030}
\email{nepstei2@gmu.edu}

\title{Semistar operations and standard closure operations}

\date{August 28, 2013}
\begin{document}
\begin{abstract}
Let $R$ be a commutative ring.  It is shown that there is an order isomorphism between a popular class of finite type closure operations on the ideals of $R$ and the poset of semistar operations of finite type.
\end{abstract}
\dedicatory{Dedicated to Professor Marco Fontana on the occasion of his $65$th birthday.}

\maketitle

\section{Introduction}
Star operations have long represented an important topic in commutative ring theory, especially among non-Noetherian practitioners, ever since they were introduced by Krull \cite{Kr-idealbook} as a generalization of some elements of number theory.  Later \cite{OkMa-semistar}, the definition was extended to the notion of ``semistar operations'', a topic that also has attracted much interest in the non-Noetherian world.

On the other hand and in parallel, particular closure operations on ideals (e.g. radical, integral closure \cite{HuSw-book}, and tight closure \cite{HuTC}) have been investigated intensively (and to great effect) in Noetherian rings.  Axiomatization of these ideas occurred even more recently, as part of an attempt toward solving some conjectures in mixed characteristic.  See the recent survey \cite{nme-guide2}.

It is clear how to relate \emph{star} operations to certain kinds of closure operations \cite[Proposition 4.1.3]{nme-guide2}.  On the other hand, \emph{semistar} operations appear to be further afield at first glance, since the star of an ideal is not always an ideal.  In this article, perhaps counterintuitively, we show that if anything, the notion of the semistar operation is \emph{more} well-suited to building a bridge between the Noetherian and non-Noetherian worlds than the notion of the star operation.

Namely, we define a particular kind of closure operation (called a \emph{standard} closure operation), a rubric under which most of the closure operations in common use fit (with the radical being the primary counterexample). Then in the main theorem (Theorem~\ref{thm:main}), we show that there is a \emph{one-to-one, order-preserving correspondence} between the standard finite-type closure operations on a ring and the finite-type semistar operations on a ring.  In doing so, we launch a basic investigation on various operations on the poset of closure operations on ideals, namely the \emph{finitization} and the \emph{standardization} of a closure operation.

In the last section of the paper, we investigate the smallest standard closure operation above the radical.  We characterize it in terms of the total ring of quotients (Theorem~\ref{thm:rads}) and give an algorithm for its computation for any ideal that has a primary decomposition (Theorem~\ref{thm:radspdec}).

It is hoped that that this work will foster greater cooperation between non-Noetherian and Noetherian researchers on either side of this hitherto-unseen bridge.

\section{Closure operations and semistar operations}
Throughout, $R$ is a commutative ring with unity, and $Q$ is its total ring of fractions.  (However, the reader loses nothing by assuming that $R$ is an integral domain.)  Let $\id(R)$ (resp. $\idf(R)$) denote the set of ideals (resp. finitely generated ideals) of $R$.  We denote by $\ffid(R)$ (resp. $\fsub(R)$) the set of finitely generated $R$-submodules (resp. arbitrary $R$-submodules) of $Q$.  The set of non-zerodivisors of $R$ will be denoted $\nzd(R)$.  The set of fractional ideals of $R$ (i.e. those elements $A \in \fsub(R)$ such that $xA \subseteq R$ for some $x\in \nzd(R)$) is denoted  $\fid(R)$.  

\noindent \textbf{Convention:} Operations on $\id(R)$ and $\idf(R)$ will be \emph{superscripted}, whereas operations on $\fid(R)$, $\ffid(R)$, and $\fsub(R)$ will be \emph{subscripted}.

\begin{defn}
A set map $\rc: \id(R) \ra \id(R)$ is called an (ideal) \emph{preclosure operation} if it satisfies the following conditions for all $I, J \in \id(R)$: \begin{itemize}
\item (Extension) $I \subseteq I^\rc$, and
\item (Order-preservation) If $J \subseteq I$, then $J^\rc \subseteq I^\rc$.
\end{itemize}
A preclosure operation $\rc$ is called a \emph{closure operation} if it also satisfies
\begin{itemize}
\item (Idempotence) $(I^\rc)^\rc = I^\rc$.
\end{itemize}
\end{defn}

For more information on closure operations, the reader may consult the recent survey article \cite{nme-guide2}.

\begin{defn}
Let $\rc: \id(R) \ra \id(R)$ be a preclosure operation. We say that $\rc$ is: \begin{itemize}
\item \emph{of finite type} if $I^\rc = \bigcup\{J^\rc \mid J \subseteq I \text{ and } J \in \idf(R)\}$,
\item \emph{weakly prime}\footnote{This name is chosen since it is a weakening of the notion of semi-prime from \cite{Pet-asym}.} if for all $w\in \nzd(R)$ and $I \in \id(R)$, $w \cdot I^\rc \subseteq (wI)^\rc$.
\item \emph{standard} if for all $w\in \nzd(R)$ and $I \in \id(R)$, $((wI)^\rc :_R w) = I^\rc$.
\end{itemize}
\end{defn}

\begin{lemma}
Any standard preclosure operation is weakly prime.
\end{lemma}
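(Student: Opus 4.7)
The plan is to unwind the two definitions and observe that the weakly prime conclusion is an immediate consequence of one direction of the standard equality. The standard hypothesis gives us a precise identification of a colon ideal, while the weakly prime conclusion only asks for a containment; unpacking the colon should essentially hand us the containment for free.

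More concretely, I would fix $w \in \nzd(R)$ and $I \in \id(R)$, and let $x \in I^\rc$ be arbitrary. Since $\rc$ is standard, we have the equality
\[
((wI)^\rc :_R w) = I^\rc,
\]
so in particular $x \in ((wI)^\rc :_R w)$. By the definition of the colon ideal, this means $wx \in (wI)^\rc$. As $x$ was arbitrary, we conclude $w \cdot I^\rc \subseteq (wI)^\rc$, which is exactly the weakly prime condition.

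There is no real obstacle here: the proof is genuinely a one-line unpacking of definitions, so the only thing to be careful about is making sure the containment $I^\rc \subseteq ((wI)^\rc :_R w)$ (which is the only half of the standard equality actually used) is cited in the correct direction. Note in particular that neither idempotence nor finite type nor any property of $w$ beyond being a nonzerodivisor is needed; the result holds for any standard preclosure operation.
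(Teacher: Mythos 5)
Your proof is correct and is exactly the elementary unwinding the paper has in mind (the paper's proof is just the word ``Elementary''): the containment $I^\rc \subseteq ((wI)^\rc :_R w)$, which is one half of the standard equality, is literally the statement $w\cdot I^\rc \subseteq (wI)^\rc$.
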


\begin{proof}Elementary. \end{proof}

\begin{defn}
Let $\rc$ be a preclosure operation.  Define $\rc_\rf: \id(R) \ra \id(R)$ (the \emph{finitization} of $\rc$) as follows: \[
I^{\rc_\rf} := \bigcup\{J^\rc  \mid J \subseteq I \text{ and } J \in \idf(R)\}.
\]
Define $\rc_\rs: \id(R) \ra \id(R)$ (the \emph{standardization} of $\rc$) as follows: \[
I^{\rc_\rs} := \bigcup\{ ((wI)^\rc :_R w) \mid w \in \nzd(R)\}.
\]
\end{defn}

It is not \emph{a priori} clear from the definitions, given a (pre)closure operation $\rc$, that $\rc_\rf$ or $\rc_\rs$ is even a (pre)closure operation, much less one with desired properties.  However, we have the following

\begin{prop}
Let $\rc: \id(R) \ra \id(R)$ be a preclosure operation.
\begin{enumerate}
\item\label{it:cf} $\rc_\rf$ is a preclosure operation of finite type, weakly prime if $\rc$ is, standard if $\rc$ is, a closure operation if $\rc$ is, and $\rc_\rf \leq \rc$.  In fact, $\rc_\rf$ is the \emph{largest} finite type preclosure operation $\rd$ such that $\rd \leq \rc$.
\item\label{it:cs} If $\rc$ is weakly prime, then $\rc_\rs$ is a standard preclosure operation, of finite type if $\rc$ is, and $\rc \leq \rc_\rs$.  In fact, $\rc_\rs$ is the \emph{smallest} standard preclosure operation $\rd$ such that $\rc \leq \rd$.
\item\label{it:cs2} If $\rc$ is a weakly prime \emph{closure} operation of finite type, then $\rc_\rs$ is a standard closure operation of finite type.
\end{enumerate} 
\end{prop}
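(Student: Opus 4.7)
The plan is to prove (1), (2), (3) in order, since (2) and (3) rely on how $\rc_\rf$ and $\rc_\rs$ behave on finitely generated ideals. Two observations will be used throughout. First, $J^{\rc_\rf} = J^\rc$ whenever $J \in \idf(R)$, because the term $L = J$ appears in the union defining $J^{\rc_\rf}$ and every other term $L^\rc$ (with fg $L \subseteq J$) is absorbed into it by order-preservation of $\rc$. Second, every fg ideal $L \subseteq wI$ has the form $L = wK$ for some fg $K \subseteq I$: just write each generator of $L$ as $w b_i$ with $b_i \in I$. Together these let one commute $w \cdot -$ and $(- :_R w)$ past the unions in the definitions.

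For part (1), extension, order-preservation, finite type, and $\rc_\rf \leq \rc$ are immediate. Maximality among finite-type preclosures below $\rc$ is a direct chase through fg subideals. Transfer of weak primality and standardness from $\rc$ to $\rc_\rf$ uses the second observation above to commute the relevant operators past the union. Idempotence, when $\rc$ is a closure, is the main step: any $x \in (I^{\rc_\rf})^{\rc_\rf}$ lies in $J^\rc$ for some fg $J = (a_1, \ldots, a_n) \subseteq I^{\rc_\rf}$; pick fg $L_i \subseteq I$ with $a_i \in L_i^\rc$ and set $L := L_1 + \cdots + L_n$, which is fg and contained in $I$. Then $J \subseteq L^\rc$, so $x \in J^\rc \subseteq (L^\rc)^\rc = L^\rc \subseteq I^{\rc_\rf}$.

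For part (2), extension (take $w=1$) and order-preservation of $\rc_\rs$ are immediate. Standardness is proved by unwinding $((wI)^{\rc_\rs} :_R w) = \{x : \exists v \in \nzd(R),\ vwx \in (vwI)^\rc\}$: the substitution $u := vw \in \nzd(R)$ recovers $I^{\rc_\rs}$, and the reverse containment uses weak primality of $\rc$ with multiplier $w$. Finite type transfers via the second observation. Minimality is direct: if $\rd$ is standard with $\rc \leq \rd$, then for each $w \in \nzd(R)$, $((wI)^\rc :_R w) \subseteq ((wI)^\rd :_R w) = I^\rd$, so $I^{\rc_\rs} \subseteq I^\rd$.

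Part (3) is the main event; by (2), the remaining obstacle is idempotence of $\rc_\rs$, and I expect this to be the hardest step. Given $x \in (I^{\rc_\rs})^{\rc_\rs}$, we have $wx \in (w I^{\rc_\rs})^\rc$ for some $w \in \nzd(R)$. Finite type of $\rc$ produces fg $J \subseteq w I^{\rc_\rs}$ with $wx \in J^\rc$, and the second observation lets me write $J = w(a_1, \ldots, a_n)$ with $a_i \in I^{\rc_\rs}$. Each $a_i$ comes with its own multiplier $v_i \in \nzd(R)$ satisfying $v_i a_i \in (v_i I)^\rc$. The key trick is to pass to the single multiplier $v := v_1 \cdots v_n \in \nzd(R)$: weak primality of $\rc$, applied with multiplier $\prod_{j \neq i} v_j$, gives $v a_i \in (vI)^\rc$ for every $i$ simultaneously. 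Another application of weak primality (with multiplier $w$) yields $vJ = w(va_1, \ldots, va_n) \subseteq (wvI)^\rc$, and then idempotence of $\rc$ gives $(vJ)^\rc \subseteq (vwI)^\rc$. Finally, weak primality produces $vwx = v(wx) \in v J^\rc \subseteq (vJ)^\rc \subseteq (vwI)^\rc$, so $x \in ((vwI)^\rc :_R vw) \subseteq I^{\rc_\rs}$, as required.
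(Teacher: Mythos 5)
Your proposal follows essentially the same route as the paper: same two observations about finitization on finitely generated ideals and about writing a finitely generated subideal of $wI$ as $wK$; same reduction of idempotence of $\rc_\rf$ to a sum $L_1 + \cdots + L_n$; same unwinding of $((wI)^{\rc_\rs}:_R w)$ for standardness; and in part~(\ref{it:cs2}), the same use of a common denominator $v = v_1\cdots v_n$ together with weak primeness and idempotence of $\rc$. The chain of containments in your part~(\ref{it:cs2}) is phrased a bit differently (your $J \subseteq wI^{\rc_\rs}$ rather than the paper's $J \subseteq I^{\rc_\rs}$), but after rescaling by $w$ these are the same argument.

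One step is missing, and it is a substantive one: you never check that $\rc_\rs$ actually lands in $\id(R)$, i.e.\ that $I^{\rc_\rs} = \bigcup_{w\in\nzd(R)}((wI)^\rc:_R w)$ is an ideal. Closure under $R$-multiplication is clear termwise, but closure under addition requires combining two \emph{different} non-zerodivisors $v$ and $w$, and that is precisely where weak primeness of $\rc$ enters: if $vx \in (vI)^\rc$ and $wy \in (wI)^\rc$, then $vw(x+y) = w(vx) + v(wy) \in w(vI)^\rc + v(wI)^\rc \subseteq (vwI)^\rc$, so $x+y \in ((vwI)^\rc :_R vw) \subseteq I^{\rc_\rs}$. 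Without this verification, the assertion that $\rc_\rs$ is a preclosure operation (let alone a standard one) is unsupported; this is the first place the weakly-prime hypothesis is genuinely needed, so it deserves to be made explicit. (The analogous check for $I^{\rc_\rf}$ in part~(\ref{it:cf}) is also omitted, but there it is routine and requires no extra hypothesis.)
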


\begin{proof}
Part (\ref{it:cf}):
Let $I \in \id(R)$.  To see that $I^{\rc_\rf}$ is an ideal, let $x, y \in I^{\rc_\rf}$ and $r\in R$.  Then there exist finitely generated ideals $J, K \subseteq I$ such that $x\in J^\rc$ and $y \in K^\rc$.  It follows that $rx \in J^\rc \subseteq I^{\rc_\rf}$, and that $x+y \in (J+K)^\rc \subseteq I^{\rc_\rf}$, by order preservation and since $J+K$ is a finitely generated subideal of $I$.

Order-preservation is clear.

To see that $\rc_\rf$ satisfies extension, note that $I \subseteq \bigcup \{(x)^\rc \mid x \in I\} \subseteq I^{\rc_\rf}$.

$\rc_\rf$ is of finite type by definition.

Now suppose $\rc$ is weakly prime.  Let $w \in \nzd(R)$, $I$ an ideal, and $x \in I^{\rc_\rf}$.  Then there is some finitely generated ideal $K \subseteq I$ such that $x\in K^\rc$.  Then since $\rc$ is weakly prime, $wx \in (wK)^\rc \subseteq (wI)^{\rc_\rf}$, finishing the proof that $\rc_\rf$ is weakly prime.

Suppose $\rc$ is standard.  Let $w\in \nzd(R)$, $I$ an ideal, and $x\in R$ such that $wx \in (wI)^{\rc_\rf}$.  Then there is a finitely generated ideal $J \subseteq wI$ such that $wx \in J^\rc$.  But then there exist $r_1, \ldots, r_n \in R$ such that $J = (w r_1, \ldots, wr_n)$.  In particular, each $w r_i \in J \subseteq wI$, so since $w$ is regular, it follows that $r_i \in I$, so that $K = (r_1, \ldots, r_n) \subseteq I$ and $J = wK$.  Thus, $x \in ((wK)^\rc : w) = K^\rc$ since $\rc$ is standard.  Then by definition, $x \in I^{\rc_\rf}$, so that $\rc_\rf$ is standard.

If $\rc$ is a closure operation, we show here that $\rc_\rf$ is idempotent.  Let $r \in (I^{\rc_\rf})^{\rc_\rf}$.  Then there is a finitely generated ideal $J \subseteq I^{\rc_\rf}$ with $r \in J^\rc$.  Say $J = (r_1, \dotsc, r_n)$.  Since each $r_i \in I^{\rc_\rf}$, there exist finitely generated ideals $J_i$ with $J_i \subseteq I$ and $r_i \in (J_i)^\rc$.  Let $K := \sum_{i=1}^n J_i$.  Then $J \subseteq K^\rc$, $K$ is finitely generated, $K \subseteq I$, and $r \in J^\rc \subseteq (K^\rc)^\rc = K^\rc$.  Thus, $r \in I^{\rc_\rf}$.

As for maximality of $\rc_\rf$, let $\rd$ be a finite-type preclosure operation with $\rd \leq \rc$.  Let $I$ be an ideal and $a \in I^\rd$.  Since $\rd$ is of finite type, there is some finitely generated ideal $J \subseteq I$ with $a\in J^\rd$.  Since $\rd \leq \rc$, we have $a \in J^\rc$.  Thus, $a \in I^{\rc_\rf}$, so $\rd \leq \rc_\rf$, as required.

Part (\ref{it:cs}): Let $I \in \id(R)$. To see that $I^{\rc_\rs}$ is an ideal, let $x, y \in I^{\rc_\rs}$ and $r\in R$.  Then there exist $v,w \in \nzd(R)$ such that $x \in ((v I)^\rc :v)$ and $y \in ((w I)^\rc :w)$.  Clearly $rx \in ((vI)^\rc :v) \subseteq I^{\rc_\rs}$.  Moreover, we have \[
vw(x+y) = w(vx) + v(wy) \subseteq (w \cdot (vI)^\rc) + (v \cdot (wI)^\rc) \subseteq (vw \cdot I)^\rc,
\]
with the last containment since $\rc$ is weakly prime.  Hence, $x+y \in ((vw  I)^\rc : vw) \subseteq I^{\rc_\rs}$.

Extension follows since $I \subseteq I^\rc = ((1I)^\rc :1) \subseteq I^{\rc_\rs}$.  Order-preservation follows since if $J \subseteq I$, we have $((wJ)^\rc :w) \subseteq ((wI)^\rc :w)$ for any $w$.

To see that $\rc_\rs$ is standard, let $I$ be an ideal and $w \in \nzd(R)$.  If $x \in I^{\rc_\rs}$, then $vx \in (vI)^\rc$ for some $v\in \nzd(R)$, whence $vwx \in w (vI)^\rc \subseteq (v\cdot wI)^\rc$, whence $wx \in ((v wI)^\rc :_R v) \subseteq (wI)^{\rc_\rs}$, so that $x \in ((wI)^{\rc_\rs} :w)$.  Conversely, if $x \in ((wI)^{\rc_\rs} :w)$, then $wx \in (wI)^{\rc_\rs}$, which means that for some $v \in \nzd(R)$, we have $vwx \in (vwI)^\rc$, so that $x \in ((vwI)^\rc :vw) \subseteq I^{\rc_\rs}$, as required.

Now, suppose $\rc$ is of finite type, and let $x \in I^{\rc_\rs}$.  Then there is some $w\in \nzd(R)$ with $wx \in (wI)^\rc$.  Since $\rc$ is of finite type, there is an ideal $J \subseteq wI$ with $J \in \idf(R)$ and $wx \in J^\rc$.  It follows that $J = wK$ for some $K \in \idf(R)$ with $wx \in (wK)^\rc$, and $K \subseteq I$ since $w$ is $R$-regular.  Thus $x \in K^{\rc_\rs}$, so that $\rc_\rs$ is of finite type.

It is obvious that $\rc \leq \rc_\rs$.  As for minimality of $\rc_\rs$, let $\rd$ be a standard preclosure operation with $\rc \leq \rd$.  Let $I$ be an ideal and take any $a \in I^{\rc_\rs}$.  Then for some $w \in \nzd(R)$, we have $wa \in (wI)^\rc \subseteq (wI)^\rd$ (since $\rc \leq \rd$).  That is, $a \in ((wI)^\rd :_R w) \subseteq I^\rd$ since $\rd$ is standard, so that $\rc_\rs \leq \rd$ as required.

Part (\ref{it:cs2}): Assuming $\rc$ is a weakly prime closure operation of finite type, we need only show that $\rc_\rs$ is idempotent.  Let $x \in (I^{\rc_\rs})^{\rc_\rs}$.  Then $wx \in (w (I^{\rc_\rs}))^\rc$ for some $w \in \nzd(R)$.  Since $\rc$ is of finite type, there is some finitely generated ideal $J$ with $J \subseteq I^{\rc_\rs}$ and $wx \in (wJ)^\rc$.  Say $J = (r_1, \dotsc, r_n)$.  Then for $i=1, \dotsc, n$, there exist $v_1, \dotsc, v_n \in \nzd(R)$ such that $v_i r_i \in (v_i I)^\rc$.  Let $v = \prod_{i=1}^n v_i$.  It follows from weak primeness of $\rc$ that $v r_i \in (v I)^\rc$ for $i=1, \dotsc, n$, and hence that $vJ \subseteq (vI)^\rc$.  Thus, \[
vwx \in v(wJ)^\rc \subseteq (wvJ)^\rc \subseteq (w(vI)^\rc)^\rc \subseteq ((wvI)^\rc)^\rc = (vwI)^\rc,
\]
since $\rc$ is both weakly prime and idempotent.  It follows that $x \in ((vwI)^\rc :_R vw) \subseteq I^{\rc_\rs}$, as required.
\end{proof}

Those familiar with the usual closure operations will recognize that Frobenius, plus, and integral closure are standard closure operations of finite type.  Tight closure \cite{HHmain} is also a standard preclosure operation, and in a Noetherian ring, it is a closure operation \cite[Proposition 4.1]{HHmain}.  In general, though, it is neither of finite type nor idempotent \cite{nme-nntc}.  In a Noetherian local ring $R$ of prime characteristic where $\ia$ is a fixed proper ideal, $\ia$-tight closure is another example of a preclosure operation  that is typically not idempotent \cite[Remark 1.4(1)]{HaYo-atc} but is easily seen to be standard.  Radical is a finite type weakly prime closure operation that is almost never standard (see Corollary~\ref{cor:radns}).

\begin{defn}\cite{OkMa-semistar}
A set map $\star: \fsub(R) \ra \fsub(R)$ is a \emph{semistar operation} provided it satisfies the following properties for all $A, B \in \fsub(R)$ and all units $u$ of $Q$: \begin{itemize}
\item (Extension) $A \subseteq A_\star$.
\item (Order-preservation) If $A \subseteq B$ then $A_\star \subseteq B_\star$.
\item (Idempotence) $(A_\star)_\star = A_\star$.
\item (Divisibility) $u \cdot A_\star = (u A)_\star$.
\end{itemize}
We say that $\star$ is \emph{of finite type} if $A_\star = \bigcup\{ B_\star \mid B \subseteq A \text{ and } B \in \ffid(R) \}$.
\end{defn}

\begin{rmk}
The above definition is slightly different from the one in the literature, in two respects.  First, we define it above for general commutative rings, whereas it was previously only explored for integral domains.  Secondly, we define semistar operations as operations on the set of \emph{all} $R$-submodules of $Q$, rather than restricting (as is traditionally done) to the nonzero $R$-submodules of $Q$.  To fit the previous literature into the system presented here, simply set $0_\star := 0$ for any previously defined semistar operation $\star$.
\end{rmk}

There is no canonical reference for semistar operations, though I might recommend \cite{FoLo-Kronecker}.  For star operations, see \cite[ch. 32--34]{Gil-MIT}.

Recall that $\star_\rf$ is defined similarly to $\rc_\rf$.  See \cite[p. 4]{OkMa-semistar}.

Examples include the trivial semistar operation, the b-operation, and certain canonical extensions of star operations.

\section{An order-isomorphism}
\begin{thm}[Main Theorem]\label{thm:main}
There is a bijection, order-preserving in both directions, between the poset of finite-type standard closure operations on $R$ and the poset of finite-type semistar operations on $R$.
\end{thm}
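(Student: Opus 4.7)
I will construct explicit maps in both directions and verify they are mutual inverses and monotone.

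\emph{From semistar to closure.} Given a finite-type semistar $\star$, define $I^{\rc_\star} := I_\star \cap R$ for $I \in \id(R)$. Extension, order-preservation, and idempotence for $\rc_\star$ are inherited directly from the corresponding semistar axioms. Finite-type of $\rc_\star$ follows from that of $\star$, because any finitely generated submodule of an ideal $I \subseteq R$ is automatically a finitely generated ideal. The standard axiom is the new content: since every $w \in \nzd(R)$ is a unit of $Q$, divisibility gives $(wI)_\star = w I_\star$, so
\[ ((wI)^{\rc_\star} :_R w) = (wI_\star \cap R) :_R w = I_\star \cap R = I^{\rc_\star}. \]

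\emph{From closure to semistar.} Given a finite-type standard closure $\rc$, define
\[ A_{\star_\rc} := \bigcup_{w \in \nzd(R)} w^{-1}(wA \cap R)^\rc \]
for $A \in \fsub(R)$. Extension, order-preservation, and finite-type are immediate (for finite-type, a finitely generated subideal $J \subseteq wA \cap R$ pulls back to the finitely generated submodule $w^{-1}J \subseteq A$). Divisibility $(uA)_{\star_\rc} = u \cdot A_{\star_\rc}$ for $u = s/t \in Q^\times$ is verified by matching the parameter $w$ in one union with $w't$ in the other, using weak-primeness of $\rc$ to absorb the extra factors. Idempotence is the most intricate step and closely mirrors the argument for $\rc_\rs$ in part (\ref{it:cs2}) of the Proposition above: given $q \in (A_{\star_\rc})_{\star_\rc}$, use finite-type of $\rc$ to find a finitely generated $J \subseteq wA_{\star_\rc} \cap R$ with $wq \in J^\rc$; each generator of $J$ yields a scaling $v_i \in \nzd(R)$, and their product $v$ combined with weak-primeness gives $vJ \subseteq (wvA \cap R)^\rc$; idempotence of $\rc$ then collapses $(vJ)^\rc$ and places $q$ back in $A_{\star_\rc}$.

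\emph{Mutual inverses and order preservation.} The identity $\rc_{\star_\rc} = \rc$ follows directly: $I^{\rc_{\star_\rc}} = I_{\star_\rc} \cap R = \bigcup_w ((wI)^\rc :_R w) = I^\rc$ by standardness. For $\star_{\rc_\star} = \star$: the containment $A_{\star_{\rc_\star}} \subseteq A_\star$ uses $(wA \cap R)_\star \subseteq (wA)_\star = wA_\star$ by divisibility, so $wq \in (wA \cap R)_\star \cap R \subseteq wA_\star$ gives $q \in A_\star$. The reverse uses finite-type and divisibility of $\star$: any $q \in A_\star$ lies in $B_\star$ for some finitely generated $B \subseteq A$, and choosing $w \in \nzd(R)$ simultaneously clearing the denominators of $q$ and of $B$ yields $wq \in (wB)_\star \cap R \subseteq (wA \cap R)_\star \cap R$, placing $q$ in $A_{\star_{\rc_\star}}$. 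Order preservation in both directions is immediate, since both constructions are monotone in the input.

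\emph{Main obstacle.} The most delicate piece is idempotence of $\star_\rc$, which requires weaving together all three hypotheses on $\rc$ (weak-primeness, standardness, and idempotence) to push closures across common denominators. A secondary subtlety is ensuring one can select a single denominator $w$ making both $wq \in R$ and $wB \subseteq R$ in the verification of $\star_{\rc_\star} = \star$.
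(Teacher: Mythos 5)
Your proposal is correct, and the overall strategy is the same as the paper's (the two directions $\star \mapsto I_\star \cap R$ and $\rc \mapsto \sigf(\rc)$, mutual inversion, monotonicity). The one genuine difference is in packaging the closure-to-semistar direction. The paper first defines $\pi(\rc)$ on fractional ideals $A$ by choosing a clearing element $x\in\nzd(R)$ with $xA\subseteq R$ and a representation $f=r/z$, then proves well-definedness independent of the choices, and finally finitizes over $\ffid(R)$ to obtain $\sigf(\rc)$. You instead define $A_{\star_\rc} := \bigcup_{w\in\nzd(R)} w^{-1}(wA\cap R)^\rc$ directly on all of $\fsub(R)$, which is unambiguous (no representative choices, hence no well-definedness lemma needed), and which one checks coincides with $\sigf(\rc)$ when $\rc$ has finite type: given $wq\in(wA\cap R)^\rc$, finite type of $\rc$ produces a finitely generated $J\subseteq wA\cap R$ with $wq\in J^\rc$, and $w^{-1}J$ is the required finitely generated submodule of $A$. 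This buys a cleaner definition at the cost of having to verify that the union is in fact a submodule (closure under addition uses weak primeness, or observe the union is directed under divisibility of $w$), a step the paper handles implicitly via the $\ffid(R)$ structure. Your sketches of divisibility, idempotence (mirroring part (\ref{it:cs2}) of the Proposition via a common multiple of the $v_i$), and the two inversion identities $\rc_{\star_\rc}=\rc$ and $\star_{\rc_\star}=\star$ are all sound; the care you flag about picking a single $w$ clearing both $q$ and the finitely generated $B$ is exactly the right subtlety. In short: same bridge, slightly shorter construction of one of the two spans.
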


Before starting the proof, we give the constructions for both directions of the correspondence.

\begin{defn}
Let $\star$ be a semistar operation.  Then we define $\kappa(\star): \id(R) \ra \id(R)$ as follows: \[
I^{\kappa(\star)} := I_\star \cap R.
\]
\end{defn}

\begin{lemma}
If $\star$ is a semistar operation, then $\kappa(\star)$ is a standard closure operation.  Moreover, $\kappa(\star)$ is of finite type whenever $\star$ is.
\end{lemma}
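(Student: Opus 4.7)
The plan is to verify each property in sequence, using the corresponding property of the semistar operation $\star$. First I would observe that $I_\star \cap R$ is indeed an ideal of $R$: since $\star$ maps into $\fsub(R)$, the set $I_\star$ is an $R$-submodule of $Q$, and intersecting with $R$ produces an $R$-submodule of $R$, i.e., an ideal.

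The extension and order-preservation axioms for $\kappa(\star)$ follow nearly verbatim from the corresponding axioms for $\star$ (combined with $I \subseteq R$ for extension). For idempotence one argues by two inclusions: on the one hand $I_\star \cap R \subseteq I_\star$ gives $(I_\star \cap R)_\star \subseteq (I_\star)_\star = I_\star$, hence $(I_\star \cap R)_\star \cap R \subseteq I_\star \cap R$; the reverse inclusion follows from extension of $\star$ applied to $I_\star \cap R$.

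The main point is the standardness axiom, and this is where the divisibility axiom of a semistar operation is essential. Given $w \in \nzd(R)$, note that $w$ is a unit of $Q$, so divisibility yields $(wI)_\star = w \cdot I_\star$. Then
\[
((wI)^{\kappa(\star)} :_R w) = \{r \in R \mid wr \in (wI)_\star \cap R\} = \{r \in R \mid wr \in w \cdot I_\star\} = I_\star \cap R = I^{\kappa(\star)},
\]
where the middle equality uses that $wr \in R$ whenever $r \in R$, and the penultimate equality uses that multiplication by the unit $w$ is injective on $Q$.

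Finally, for the finite-type claim, I would observe that any finitely generated $R$-submodule $B$ of $I \subseteq R$ is automatically a finitely generated ideal, so $\{B \in \ffid(R) \mid B \subseteq I\} = \{J \in \idf(R) \mid J \subseteq I\}$. Intersecting both sides of $I_\star = \bigcup\{J_\star \mid J \in \idf(R),\ J \subseteq I\}$ with $R$ (which commutes with unions) gives $I^{\kappa(\star)} = \bigcup\{J^{\kappa(\star)} \mid J \in \idf(R),\ J \subseteq I\}$, as required. None of the steps present a real obstacle; the one that requires the most care is standardness, where one must recognize that a nonzerodivisor of $R$ is a unit of $Q$ and then invoke divisibility.
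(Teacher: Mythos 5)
Your proof is correct and follows essentially the same approach as the paper's: both derive standardness from the divisibility axiom of $\star$ via the observation that a non-zerodivisor of $R$ is a unit of $Q$, and both identify finitely generated $R$-submodules of $I \subseteq R$ with finitely generated ideals to get the finite-type claim. Your idempotence argument is slightly more explicit about invoking order-preservation before applying idempotence of $\star$, but the content is the same.
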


\begin{proof}
Extension is clear, as is order-preservation.  For idempotence, let $r \in (I^{\kappa(\star)})^{\kappa(\star)}$.  Then $r \in (I_\star)_\star = I_\star$, but since we also have $r \in R$, it follows that $r \in I_\star \cap R = I^{\kappa(\star)}$.

To see standardness, let $w\in \nzd(R)$.  Then \[
w \cdot I^{\kappa(\star)} \subseteq (w \cdot I_\star ) \cap R = (w I)_\star \cap R = (w I)^{\kappa(\star)},
\]
where the first equality follows from divisibility of $\star$.  Hence $I^{\kappa(\star)} \subseteq ((w I)^{\kappa(\star)} : w)$.

For the reverse containment, let $r \in ((w I)^{\kappa(\star)} :_R w)$, where $w\in \nzd(R)$.  Then \[
wr \in (w I)^{\kappa(\star)} = (wI)_\star \cap R \subseteq (wI)_\star = w \cdot I_\star,
\]
where the last equality follows from divisibility of $\star$.  That is, there is some $f \in I_\star$ such that $wr = wf$.  But $w$ is a unit of $Q$, so $r = f \in I_\star \cap R = I^{\kappa(\star)}$.

Finally, assume that $\star$ is of finite type. Let $I$ be an ideal of $R$ and $r \in I^{\kappa(\star)} = I_\star \cap R$.  Then there is a finitely generated $R$-submodule $J$ of $I$ with $r \in J_\star$.  Hence, $r \in J_\star \cap R = J^{\kappa(\star)}$.  Since $J \in \idf(R)$, it follows that $\kappa(\star)$ is of finite type.
\end{proof}

\begin{defn}\label{def:sigf}
Let $\rc$ be a standard preclosure operation.  We define $\pi(\rc): \fid(R) \ra \fsub(R)$ as follows:

For an element $f \in Q$ and a fractional ideal $A$, pick a representation $f=r/z$, $r\in R$, $z \in \nzd(R)$ and choose $x \in \nzd(R)$ such that $xA \subseteq R$ (so that $xA \in \id(R)$).  Then we say that $f\in A_{\pi(\rc)}$ if $xr \in (z \cdot xA)^\rc$.

Next, we define $\sigf(\rc): \fsub(R) \ra \fsub(R)$ as follows: \[
A_{\sigf(\rc)} := \bigcup \{ B_{\pi(\rc)} \mid B \subseteq A \text{ and } B \in \ffid(R)\}.
\]
\end{defn}

\begin{prop}
Let $\rc$ be a standard preclosure operation. \begin{enumerate}
\item\label{it:pidef} $\pi(\rc)$ is well-defined on $\fid(R)$, independent of the choices of $x$ and $z$ above.
\item\label{it:sigmass} If $\rc$ is moreover a closure operation, then $\sigf(\rc)$ is a semistar operation of finite type.
\end{enumerate} 
\end{prop}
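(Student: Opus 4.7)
For part (\ref{it:pidef}), I would verify well-definedness in two independent steps. Holding $f = r/z$ fixed, for two choices $x, x' \in \nzd(R)$ with $xA, x'A \subseteq R$ I compare both defining conditions against the common refinement by $xx'$: two applications of standardness of $\rc$ (stripping first $x'$, then $x$) show that $xr \in (zxA)^\rc$, $x'r \in (zx'A)^\rc$, and $xx'r \in (zxx'A)^\rc$ are mutually equivalent. Holding $x$ fixed, for two representations $f = r/z = r'/z'$ of the same element of $Q$, the relation $rz' = r'z$ in $R$ together with standardness applied to the non-zerodivisors $z$ and $z'$ reduces both defining conditions to the single identity $xz'r = xzr' \in (zz'xA)^\rc$.

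For part (\ref{it:sigmass}), I would check each semistar axiom in turn. First, $A_{\pi(\rc)}$ is an $R$-submodule of $Q$: closure under $R$-scaling is immediate from the absorbing property of ideals, while closure under addition uses weak primeness of $\rc$ (available by the earlier lemma), since for $f = r/z,\, g = r'/z' \in A_{\pi(\rc)}$ sharing a common $x$ I obtain
\[
x(rz'+r'z) \;=\; z'(xr) + z(xr') \;\in\; z'(zxA)^\rc + z(z'xA)^\rc \;\subseteq\; (zz'xA)^\rc.
\]
For extension of $\sigf(\rc)$, given $f = r/z \in A$, I take $B = Rf$ and $x = z$, so $xr = zr$ lies trivially in $(z \cdot xB)^\rc$. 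Order-preservation of $\pi(\rc)$ and $\sigf(\rc)$, and finite type of $\sigf(\rc)$, are immediate from the definitions.

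Divisibility I would first establish at the level of $\pi(\rc)$. For a unit $u = a/b$ of $Q$ with $a, b \in \nzd(R)$, and $f = r/z \in A_{\pi(\rc)}$ with scaling $x$, the fractional ideal $uA$ admits the scaling $y = bx$, while $uf = ar/(bz)$ has numerator $ar$ and denominator $bz$, so the condition to check is $abxr \in (abzxA)^\rc$. Two applications of weak primeness to $xr \in (zxA)^\rc$ supply this. Symmetry between $u$ and $u^{-1}$ delivers the equality $u \cdot A_{\pi(\rc)} = (uA)_{\pi(\rc)}$, and the corresponding identity for $\sigf(\rc)$ follows by a union argument, since multiplication by $u$ preserves finite generation and the inclusions $B \subseteq A$ and $uB \subseteq uA$ correspond bijectively.

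The main obstacle is idempotence of $\sigf(\rc)$. Given $g \in (A_{\sigf(\rc)})_{\sigf(\rc)}$, pick a finitely generated $B \subseteq A_{\sigf(\rc)}$ with $g \in B_{\pi(\rc)}$, generators $f_1, \ldots, f_n$ of $B$, and finitely generated $C_i \subseteq A$ with $f_i \in (C_i)_{\pi(\rc)}$; set $C = \sum_i C_i \subseteq A$. I would clear all denominators with a single non-zerodivisor $x$, sending each $C_i$, the submodule $B$, and the element $g$ into $R$, thereby converting the whole configuration into a statement purely about ideals of $R$. Then, paralleling the bookkeeping of part (\ref{it:cs2}) of the preceding proposition, I would use weak primeness to align all the $\rc$-inclusions for the $f_i$ over a common denominator $v = \prod v_i$, and finally invoke idempotence of $\rc$ to collapse the resulting nested closure into $g \in C_{\pi(\rc)} \subseteq A_{\sigf(\rc)}$. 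Tracking which non-zerodivisor plays which role in the simultaneous clearing of denominators is the delicate point.
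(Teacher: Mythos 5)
Your proof is correct and takes essentially the same route as the paper's, with a few organizational differences worth noting. For part~(\ref{it:pidef}) you split well-definedness into two steps (vary the scaling $x$ with $f$ fixed, then vary the representation $r/z$ with $x$ fixed, each via a common-refinement trick), whereas the paper does both at once with a single chain ending in one application of standardness; both are fine and yours may be slightly easier to check. For part~(\ref{it:sigmass}) you explicitly verify that $A_{\pi(\rc)}$ (hence $A_{\sigf(\rc)}$) is an $R$-submodule of $Q$, which the paper leaves tacit, and your extension and divisibility computations match the paper's. The one place where your sketch drifts is idempotence: you propose to import the $v = \prod_i v_i$ alignment from the proof that $\rc_\rs$ is idempotent (part~(\ref{it:cs2}) of the earlier proposition). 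That bookkeeping was needed there precisely because there was no well-definedness statement to lean on; here, once you have chosen a single $x$ clearing all denominators (so $x f_i \in R$ and $xC_i \subseteq R$), part~(\ref{it:pidef}) already tells you that $f_i \in (C_i)_{\pi(\rc)}$ is equivalent to $x f_i \in (x C_i)^\rc$, with no residual $v_i$'s to multiply together. So the $v$ step is harmless but superfluous; dropping it gives exactly the paper's argument: $xB \subseteq \sum_i (xC_i)^\rc \subseteq (xC)^\rc$, then apply weak primeness and idempotence of $\rc$ to conclude $xr \in (zxC)^\rc$, i.e.\ $g \in C_{\pi(\rc)} \subseteq A_{\sigf(\rc)}$.
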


\begin{proof}
Part (\ref{it:pidef}): Given $A \in \fid(R)$ and $f\in Q$.  Let $x, z, r$ be as above, so that $f = r/z$ and $I := xA \subseteq R$, and suppose $f \in A_{\pi(\rc)}$ according to these choices -- that is, $xr \in (zI)^\rc$.  Let $x', z', r'$ be another valid choice.  That is, $f = r'/z'$, $J := x'A \subseteq R$, and $x', z' \in \nzd(R)$.  We need to show that $f \in  A_{\pi(\rc)}$ according to the \emph{latter} set of choices as well.

Note that $xJ = xx'A = x'I$ and that $zr' = zz'f = z'r$.  Therefore, \begin{align*}
xz \cdot x'r' &= xx'zr' = xx'z'r = x'z'xr \\
&\in x'z'(zI)^\rc \subseteq (z'zx'I)^\rc = (z'z xJ)^\rc = (xz \cdot z'J)^\rc,
\end{align*}
so that by standardness of $\rc$, we have $x'r' \in (z'J)^\rc$, as required.

Part (\ref{it:sigmass}): Extension follows from the fact that $f \in (Rf)_{\pi(\rc)}$ for any $f \in Q$.  Order-preservation is clear.

For divisibility, let $A \in \fsub(R)$ and $f \in A_{\sigf(\rc)}$.  Then there is some $B \in \ffid(R)$ with $B \subseteq A$ and $f \in B_{\pi(\rc)}$.  Choose $x, z \in \nzd(R)$ and $r\in R$ such that $I := xB \subseteq R$ and $f = r/z$.  Let $u = w/y$ be a unit of $Q$ (so that $w, y$ are non-zerodivisors of $R$).  Then $xr \in (zI)^\rc$.  Multiplying both sides by $yw$, we get \[
yx \cdot wr = ywxr \in yw(zI)^\rc \subseteq (ywzI)^\rc = (yz \cdot wI)^\rc,
\]
which means precisely that $uf = \frac{wr}{yz} \in \left(\frac{wI}{yx}\right)_{\pi(\rc)} = (uB)_{\pi(\rc)}$.  But $uB$ is a finitely generated $R$-submodule of $uA$, whence $uf \in (uA)_{\sigf(\rc)}$.  Hence, $u \cdot A_{\sigf(\rc)} \subseteq (uA)_{\sigf(\rc)}$.

For the opposite direction, replace $u$ in the previous argument with $t := u^{-1}$ and $A$ with $D := uA$.  Then we have \[
u^{-1} \cdot (uA)_{\sigf(\rc)} = t \cdot D_{\sigf(\rc)} \subseteq (tD)_{\sigf(\rc)} = (u^{-1}uA)_{\sigf(\rc)} = A_{\sigf(\rc)}.
\]
Multiplying both sides by $u$, it follows that $(uA)_{\sigf(\rc)} \subseteq u \cdot A_{\sigf(\rc)}$.

By definition/construction, $\sigf(\rc)$ has finite type.

Finally, we must show idempotence.  Let $f=r/z \in (A_{\sigf(\rc)})_{\sigf(\rc)}$.  Then there exists $B \in \ffid(R)$ with $B \subseteq A_{\sigf(\rc)}$ and $f \in B_{\pi(\rc)}$.  Say $B = \sum_{i=1}^n R f_i$.  Since each $f_i \in A_{\sigf(\rc)}$, there exist finitely generated $R$-submodules $B_i$ of $A$ such that $f_i \in (B_i)_{\pi(\rc)}$.  Let $D := \sum_{i=1}^n B_i$.    There exists some $x\in \nzd(R)$ such that $xB \subseteq R$ and $xD \subseteq R$.  Let $r_i := x f_i$ for each $i$ (hence $r_i \in R$).  We have \[
xB = (r_1, \dotsc, r_n) \subseteq \sum_i (x B_i)^\rc \subseteq \left(\sum_i (x B_i)\right)^\rc = (xD)^\rc.
\]
Hence, since $f \in B_{\pi(\rc)}$, \[
xr \in (zxB)^\rc \subseteq (z \cdot [(xD)^\rc])^\rc \subseteq ((zxD)^\rc)^\rc = (zxD)^\rc, 
\]
which means that $f=r/z \in D_{\pi(\rc)}$.  But since $D$ is a finitely generated $R$-submodule of $A$, it follows that $f \in A_{\sigf(\rc)}$.
\end{proof}

\begin{thm}\label{thm:inv1}
For any semistar operation $\star$ on $R$, one has $\sigf(\kappa(\star)) = \star_\rf$.  In particular, if $\star$ is of finite type, then $\sigf(\kappa(\star)) = \star$.
\end{thm}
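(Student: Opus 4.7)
The plan is to reduce the identity to a pointwise statement on finitely generated submodules and then invoke divisibility of $\star$. By construction both $\sigf(\kappa(\star))$ and $\star_\rf$ are of finite type, so for each $A \in \fsub(R)$ they are determined by their values on the finitely generated submodules $B \subseteq A$. Thus it suffices to check that
\[
B_{\pi(\kappa(\star))} = B_\star \quad \text{for every } B \in \ffid(R).
\]

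Fix such a $B$, pick $x \in \nzd(R)$ with $I := xB \subseteq R$, and take $f = r/z \in Q$ with $r \in R$ and $z \in \nzd(R)$. Unwinding the definitions of $\pi$ and $\kappa$ gives
\[
f \in B_{\pi(\kappa(\star))} \iff xr \in (zI)^{\kappa(\star)} = (zxB)_\star \cap R.
\]
Since $xr \in R$ automatically, the intersection with $R$ drops out. Divisibility of $\star$ at the unit $zx$ of $Q$ then yields $(zxB)_\star = zx \cdot B_\star$, and dividing the containment $xr \in zx \cdot B_\star$ through by the unit $zx$ converts it to $r/z = f \in B_\star$. Each implication reverses, so $B_{\pi(\kappa(\star))} = B_\star$.

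Taking unions over finitely generated $B \subseteq A$ now yields $\sigf(\kappa(\star)) = \star_\rf$. The ``in particular'' clause is immediate: if $\star$ is already of finite type, then $\star_\rf = \star$.

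I do not anticipate a real obstacle here — the argument is essentially an unwinding of definitions, with divisibility of $\star$ carrying the substantive weight. The only point requiring care is that the definition of $\pi(\kappa(\star))$ depends on an \emph{ad hoc} choice of representatives $x$ and $z$, but well-definedness is exactly the content of the preceding proposition, so once the equivalence above is verified for one valid choice it is verified for all.
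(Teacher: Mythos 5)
Your proposal is correct and is essentially the same argument as the paper's: the paper proves both inclusions by tracking an element $g = r/z \in A_{\star_\rf}$ (resp.\ $A_{\sigf(\kappa(\star))}$), choosing $B \in \ffid(R)$ and $x \in \nzd(R)$, and using divisibility at the unit $zx$ to convert $(zxB)_\star \cap R$ into the membership $g \in B_\star$; your reduction to the clean equivalence $B_{\pi(\kappa(\star))} = B_\star$ for $B \in \ffid(R)$ is just a tidier packaging of the same computation.

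One small point worth making explicit: the claim that it suffices to check the identity on finitely generated $B$ rests on the observation that $B_{\sigf(\kappa(\star))} = B_{\pi(\kappa(\star))}$ and $B_{\star_\rf} = B_\star$ when $B$ itself is finitely generated, which in turn uses that $\pi(\kappa(\star))$ is order-preserving on $\ffid(R)$. This is true (and follows from order-preservation of $\kappa(\star)$ once a common denominator $x$ is chosen for $C \subseteq B$), but it deserves a sentence; the paper sidesteps this by never restricting to finitely generated $A$ and instead working with arbitrary $A$ and both unions directly. Aside from this, your reduction is sound and the divisibility step, dropping the intersection with $R$ because $xr$ already lies in $R$, and dividing through by the unit $zx$, matches the paper exactly.
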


\begin{proof}
Let $A \in \fsub(R)$ and let $\frac{r}{z} = g \in A_{\star_\rf}$.  By definition of $\star_\rf$, there exists $B \in \ffid(R)$ with $B \subseteq A$ and $g \in B_\star$.  Then $r \in z \cdot B_\star = (zB)_\star$, so letting $x \in \nzd(R)$ be such that $xB \subseteq R$, we have \[
xr \in x\cdot ((zB)_\star \cap R) \subseteq (x(zB)_\star) \cap R = (zxB)_\star \cap R = (z \cdot xB)^{\kappa(\star)}.
\]
Then by definition, $g= \frac{r}{z} \in B_{\pi(\kappa(\star))}$, so that $g \in A_{\sigf(\kappa(\star))}$, by definition of $\sigf$.

Conversely, suppose $\frac{r}{z} = g \in A_{\sigf(\kappa(\star))}$. Let $B \subseteq A$ with $B \in \ffid(R)$ and $g \in B_{\pi(\kappa(\star))}$.  Let $x \in \nzd(R)$ such that $xB \subseteq R$.  Then by definition, \[
xr \in (zxB)^{\kappa(\star)} = (zxB)_\star \cap R \subseteq (zxB)_\star = xz \cdot B_\star.
\]
Then dividing by the unit $zx$ of $Q$, we have $g=\frac{r}{z} \in B_\star$.  Since $B$ is a finitely generated $R$-submodule of $A$, it follows that  $g \in A_{\star_\rf}$, as required.

The last statement is obvious.
\end{proof}

\begin{prop}\label{pr:inv2}
Let $\rc$ be a standard closure operation.  Then $\kappa(\sigf(\rc)) = \rc_\rf$.  In particular, if $\rc$ is of finite type, then $\kappa(\sigf(\rc)) = \rc$.
\end{prop}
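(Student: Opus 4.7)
The plan is to unpack the definitions carefully; the whole content will come from observing that, for fractional ideals already contained in $R$, the definition of $\pi(\rc)$ simplifies drastically, after which standardness of $\rc$ does all of the work. Fix $I \in \id(R)$. By definition $I^{\kappa(\sigf(\rc))} = I_{\sigf(\rc)} \cap R$, and $I_{\sigf(\rc)}$ is the union of $B_{\pi(\rc)}$ over finitely generated $R$-submodules $B \subseteq I$.

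The key observation is that whenever $B \in \ffid(R)$ satisfies $B \subseteq I \subseteq R$, we have $B \in \idf(R)$, so in the definition of $\pi(\rc)$ applied to such a $B$ one may legitimately take $x = 1$. Consequently, for $f = r/z \in Q$ with $r \in R$ and $z \in \nzd(R)$, the condition $f \in B_{\pi(\rc)}$ reduces to $r \in (zB)^\rc$.

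Now I would intersect with $R$. An element $a \in R$ lies in $I^{\kappa(\sigf(\rc))}$ iff there exist $B \in \idf(R)$ with $B \subseteq I$ and $z \in \nzd(R)$ such that $za \in (zB)^\rc$ (using the representation $a = (za)/z$). By standardness of $\rc$, $za \in (zB)^\rc$ is equivalent to $a \in ((zB)^\rc :_R z) = B^\rc$. So $a \in I^{\kappa(\sigf(\rc))}$ iff $a \in B^\rc$ for some finitely generated $B \subseteq I$, i.e., iff $a \in I^{\rc_\rf}$. The reverse inclusion is the same line read backward: given $a \in B^\rc$ with $B \in \idf(R)$ and $B \subseteq I$, the choice $z = 1$ exhibits $a = a/1 \in B_{\pi(\rc)} \cap R \subseteq I^{\kappa(\sigf(\rc))}$.

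The ``in particular'' clause is immediate, since $\rc_\rf = \rc$ when $\rc$ is of finite type. There is no serious obstacle; the entire content is the observation that fractional ideals contained in $R$ admit the choice $x = 1$ in the definition of $\pi(\rc)$, after which the standardness axiom collapses both sides to the same condition, namely existence of a finitely generated $B \subseteq I$ with $a \in B^\rc$.
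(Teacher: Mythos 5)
Your proof is correct and takes essentially the same route as the paper's: both reduce the problem to the observation that for $B\in\idf(R)$ with $B\subseteq I\subseteq R$, one may take $x=1$ and a trivial representation of $a\in R$ in the definition of $\pi(\rc)$, so that $a\in B_{\pi(\rc)}$ becomes simply $a\in B^\rc$. The only cosmetic difference is that you pass through a general denominator $z$ and invoke standardness to collapse it, whereas the paper chooses $z=1$ outright (relying on the already-established well-definedness of $\pi(\rc)$, which itself used standardness).
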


\begin{proof}
Let $I \in \id(R)$ and $r \in R$.  

If $r \in I^{\rc_\rf}$, then there is some $J \in \idf(R)$ with $J \subseteq I$ and $r \in J^\rc$, by definition of $\rc_\rf$.  Clearly, then $r \in J_{\pi(\rc)}$ (by using the denominators $1$ and $1$), so that since $J$ is finitely generated, $r \in I_{\sigf(\rc)} \cap R = I^{\kappa(\sigf(\rc))}$.  

If, conversely, $r \in  I^{\kappa(\sigf(\rc))}$, this means that $r \in I_{\sigf(\rc)} \cap R$, so that there is some finitely generated ideal $J \subseteq I$ with $r \in J_{\pi(\rc)}$.  Again using denominators $1$ and $1$, it follows that $r \in J^\rc$, whence $r \in I^{\rc_\rf}$.

The last statement is obvious.
\end{proof}

\begin{proof}[Proof of Theorem~\ref{thm:main}]
All that remains after Theorem~\ref{thm:inv1} and Proposition~\ref{pr:inv2} is order preservation.  So let $\star$ and $\circ$ be semistar operations such that $\star \leq \circ$ -- that is, $A_\star \subseteq A_{\circ}$ for all $A \in \fsub(R)$.  Then in particular this holds for all ideals $I$ of $R$, so we have \[
I^{\kappa(\star)} = I_\star \cap R \subseteq I_{\circ} \cap R = I^{\kappa(\circ)}.
\]
Hence, $\kappa$ is order-preserving.

To see that $\sigf$ is also order-preserving, let $A \in \fsub(R)$ and let $\rc \leq \rc'$ be standard closure operations.  Let $f = r/z \in A_{\sigf(\rc)}$.  Then there is some finitely generated $R$-submodule $B$ of $A$ with $f\in B_{\pi(\rc)}$.  Let $x \in \nzd(R)$ with $xB \subseteq R$.  Then $xr \in (zxB)^\rc \subseteq (zxB)^{\rc'}$, whence $f \in B_{\pi(\rc')}$, so that $f\in A_{\sigf(\rc')}$, as required.
\end{proof}

\begin{defn}
Say that a semistar operation $\star$ is \emph{of fractional type} if for all $A \in \fsub(R)$, one has \[
A_\star = \bigcup \{ B_\star \mid B \subseteq A, B \in \fid(R) \}.
\]
\end{defn}

Note: When $R$ is Noetherian, ``fractional type'' coincides with ``finite type'' for semistar operations.

It is natural to ask whether all semistar operations are of fractional type.  The answer in general, however, is no.  The following is essentially due to Marco Fontana \cite{Fo-pers}:

\begin{prop}
Let $R$ be a commutative ring.  Then all semistar operations on $R$ are of fractional type if and only if $\fsub(R) = \fid(R) \cup \{Q\}$.
\end{prop}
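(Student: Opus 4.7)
The plan is to handle the two implications separately. The ``if'' direction is short, while the ``only if'' direction proceeds by contraposition, through the construction of an explicit semistar operation that fails to be of fractional type.

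For the ``if'' direction, suppose $\fsub(R) = \fid(R) \cup \{Q\}$. Given a semistar operation $\star$ and $A \in \fsub(R)$, one of two cases holds. If $A \in \fid(R)$, then $B := A$ itself is a fractional subideal that witnesses the required equality. If $A = Q$, then for any $f \in A_\star \subseteq Q$ I write $f = r/z$ with $r\in R$ and $z \in \nzd(R)$, observe that $Rf \in \fid(R)$ (since $z \cdot Rf = Rr \subseteq R$), and use extension to conclude $f \in (Rf)_\star$, with $Rf \subseteq Q = A$.

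For the ``only if'' direction I argue by contraposition. Pick any $A \in \fsub(R) \setminus (\fid(R) \cup \{Q\})$ and define $\star \colon \fsub(R) \to \fsub(R)$ by
\[
B_\star := \begin{cases} B & \text{if } B \in \fid(R), \\ Q & \text{otherwise.} \end{cases}
\]
Verifying the four semistar axioms is the bulk of the work. Extension and idempotence are immediate (for idempotence, $Q_\star = Q$ regardless of whether $Q$ is itself fractional). Order-preservation reduces to the observation that a submodule of a fractional ideal is fractional, so one never has $B \subseteq C$ with $B \notin \fid(R)$ but $C \in \fid(R)$. Divisibility is the main technical point: writing a unit of $Q$ as $u = p/q$ with $p,q \in \nzd(R)$, I check that $uB \in \fid(R)$ iff $B \in \fid(R)$ by using that $xB \subseteq R$ implies $(xq)(uB) = p(xB) \subseteq pR \subseteq R$ with $xq \in \nzd(R)$, and applying the same argument to $u^{-1}$ for the reverse implication.

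Once $\star$ is in hand, failure of fractional type is manifest: $A_\star = Q$ (since $A \notin \fid(R)$), while
\[
\bigcup\{B_\star : B \subseteq A,\ B \in \fid(R)\} \;=\; \bigcup\{B : B \subseteq A,\ B \in \fid(R)\} \;=\; A,
\]
with the last equality because every $f \in A$ lies in the fractional subideal $Rf \subseteq A$. Since $A \ne Q$ by assumption, the two sides differ. I expect the verification of the divisibility axiom to be the only step requiring genuine care; everything else is elementary bookkeeping.
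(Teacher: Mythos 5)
Your proof is correct and follows essentially the same route as the paper: the same construction of the semistar operation that fixes fractional ideals and sends everything else to $Q$, the same key observation that any submodule of a fractional ideal is fractional, and the same conclusion that the chosen $A$ witnesses the failure of fractional type. You simply supply more detail than the paper (notably the unit calculation for divisibility, and the role of principal fractional ideals $Rf$ in the ``if'' direction), which the paper leaves as ``easy'' or ``obvious.''
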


\begin{proof}
It is obvious that if $\fsub(R) = \fid(R) \cup \{Q\}$, any semistar operation must be of fractional type.

Suppose, on the other hand, that $\fsub(R) \neq \fid(R) \cup \{Q\}$.  Define $\rg: \fsub(R) \ra \fsub(R)$ as follows:
\[
A_\rg := \begin{cases}
A, &\text{if } A \in \fid(R),\\
Q, &\text{otherwise.}
\end{cases}
\]
It is easy to see that $\rg$ is a semistar operation -- the key point is that any $R$-submodule of a fractional ideal is a fractional ideal.  However, choose an element $A \in \fsub(R) \setminus (\fid(R) \cup \{Q\})$.  Then  $A \subseteq \bigcup \{ B_\rg \mid B \subseteq A, B \in \fid(R) \} = \bigcup \{B \mid B \subseteq A, B \in \fid(R) \} \subseteq A$, so all are equalities.  But $A_\rg = Q \neq A$.  Thus, $\rg$ is not of fractional type.
\end{proof}

Such rings are rare.  For instance any such ring must clearly be \emph{conducive} (i.e. for any \emph{ring} $T$ with $R \subseteq T \subseteq Q$, $T$ is a fractional $R$-ideal) \cite{DoFe-conducive}, and if $R$ is a Noetherian conducive integral domain, then $R$ is local of dimension $\leq 1$ \cite[Corollary 2.7]{DoFe-conducive}.

\begin{rmk}
Let $\rc$ be a standard preclosure operation on $R$.  Let $\pi(\rc): \fid(R) \ra \fsub(R)$ be as in Definition~\ref{def:sigf}, and define $\sig(\rc): \fsub(R) \ra \fsub(R)$ by \[
A_{\sig(\rc)} := \bigcup \{ B_{\pi(\rc)} \mid B \subseteq A \text{ and } B \in \fid(R)\},
\]
As noted above, if $R$ is Noetherian, then $\fid(R) = \ffid(R)$, so in this case $\sig=\sigf$ and if $\rc$ is a standard closure operation then $\sig(c)=\sigf(c)$ is a semistar operation.

Even in the non-Noetherian case, the proofs of the results in this section show that $\sig(\rc)$ is extensive, order-preserving, and divisible.  However, even if $\rc$ is a \emph{closure} operation, the author does not know whether $\sig(\rc)$ is \emph{idempotent} (the only obstruction to $\sig(\rc)$ being a semistar operation).  If so, perhaps this could provide the basis for a bijection between the poset of standard closure operations on $R$ and the poset of \emph{semistar operations on $R$ of fractional type}.  In other words, we propose the following questions: \begin{enumerate}
\item If $\rc$ is a standard closure operation on $R$, must $\sig(\rc)$ be a semistar operation on $R$? (It clearly has fractional type.)
\item If so, do $\sig$ and $\kappa$ create a bijection between the semistar operations on $R$ of fractional type and the standard closure operations on $R$?
\end{enumerate}
\end{rmk}

\begin{example}
It is worth looking at one example of the correspondence outlined above.

Let $R$ be a commutative ring and $Q$ its total ring of fractions.  We define the \emph{b-operation} $\rb: \fsub(R) \ra \fsub(R)$ as follows.  For $A \in \fsub(R)$ and $n \in \N$, let $A^n$ be the $R$-submodule of $Q$ generated by all $n$-fold products of elements of $A$.  Then for $x\in R$, we say $x \in A_\rb$ if there is some $n \in \N$ and there exist elements $a_1 \in A, a_2 \in A^2, \dotsc, a_n \in A^n$ such that \[
x^n + a_1 x^{n-1} + a_2 x^{n-2} + \cdots + a_n = 0.
\]
In the case where $R$ is an integral domain, this is the usual definition of the b-operation, and the usual proofs go through to show that it is a semistar operation for arbitrary $R$.  Note also that it is of finite type.

On the other hand, one defines the \emph{integral closure} of ideals $-: \id(R) \ra \id(R)$ as follows:
For $r\in R$ and $I \in \id(R)$, we say $r \in I^-$ if there is some $n \in \N$ and there exist elements $i_1 \in I, i_2 \in I^2, \dotsc, i_n \in I^n$ such that \[
r^n + i_1 r^{n-1} + i_2 r^{n-2} + \cdots + i_n = 0.
\]
This is the usual definition of integral closure of ideals (cf. \cite{NR} or \cite{HuSw-book}), and it is  a standard closure operation \cite[Proposition 4.1.3(ii)]{nme-guide2} of finite type.

An easy computation shows that $\kappa(\rb) = -$ and $\sigf(-) = \rb$.
\end{example}

\section{The standard closure operation associated to radical}
Let $R$ be any commutative ring.  Define $\rad: \cI(R) \ra \cI(R)$ by $I^\rad := \{f \in R \mid \exists n \in \N: f^n \in I\}$, called the \emph{radical} (or sometimes the \emph{nilradical}) of an ideal $I$.  This notion is doubtless familiar; only the notation is unusual.  

Radical is clearly a weakly prime closure operation of finite type.  However, it is almost never standard.  To see this, we begin with the following characterization of $\rad_\rs$:

\begin{thm}\label{thm:rads}
Let $R$ be a commutative ring, let $Q$ be its total ring of fractions, and let $I$ be an ideal of $R$.  Then \[
I^{\rad_\rs} = (IQ)^\rad \cap R.
\]
\end{thm}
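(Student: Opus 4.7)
The plan is to prove both containments by a concrete element-chase, unpacking both definitions and translating between ``$wx \in (wI)^\rad$'' and ``$x^n \in IQ$'' via clearing and introducing denominators in the non-zerodivisor $w$.

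For $(\subseteq)$, start with $x \in I^{\rad_\rs}$. By definition there is $w \in \nzd(R)$ with $x \in ((wI)^\rad :_R w)$, so $(wx)^n \in wI$ for some $n \geq 1$, say $w^n x^n = wi$ with $i \in I$. Since $w$ is a non-zerodivisor, cancel one $w$ to get $w^{n-1} x^n = i \in I$. Then in $Q$, $x^n = i/w^{n-1} \in IQ$, and since $x \in R$ this gives $x \in (IQ)^\rad \cap R$.

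For $(\supseteq)$, start with $x \in (IQ)^\rad \cap R$, so $x^n \in IQ$ for some $n \geq 1$. Write $x^n$ as a finite sum of elements $i_k/w_k$ with $i_k \in I$ and $w_k \in \nzd(R)$, and let $w = \prod_k w_k$; clearing denominators yields $wx^n \in I$ with $w \in \nzd(R)$. The key identity is then
\[
(wx)^{n+1} \;=\; w \cdot \bigl(x \cdot w^{n-1}\bigr) \cdot (wx^n),
\]
and since $x \cdot w^{n-1} \in R$ and $wx^n \in I$, the bracketed factor lies in $I$, so $(wx)^{n+1} \in wI$. Hence $wx \in (wI)^\rad$, i.e.\ $x \in ((wI)^\rad :_R w) \subseteq I^{\rad_\rs}$.

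I expect no serious obstacle: the only subtle point is choosing the right exponent in the $(\supseteq)$ direction, since the naive attempt to deduce $(wx)^n \in wI$ directly from $wx^n \in I$ fails when $n=1$ (one has $wx \in I$ but not necessarily $wx \in wI$). Bumping the exponent to $n+1$ and factoring one $w$ outside resolves this uniformly for all $n \geq 1$. Clearing denominators for an arbitrary element of $IQ$ using a product of the appearing $w_k$'s is routine and uses only that $\nzd(R)$ is multiplicatively closed.
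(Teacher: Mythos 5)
Your proof is correct and follows essentially the same element-chase as the paper's. The one small stylistic difference is in handling the $n=1$ edge case in the $(\supseteq)$ direction: the paper assumes WLOG $n \geq 2$ before multiplying by $w^{n-1}$ to get $(wx)^n \in w^{n-1}I \subseteq wI$, whereas you bump the exponent to $n+1$ at the end via the factorization $(wx)^{n+1} = w\bigl((xw^{n-1})(wx^n)\bigr)$; the two devices are interchangeable.
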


\begin{proof}
Let $W$ be the set of regular elements of $R$, so that $Q=R_W$.  Let $x \in I^{\rad_\rs}$.  Then for some $w\in W$, we have $wx \in (wI)^\rad$.  That is, there is some $n \in \N$ such that $(wx)^n \in wI$, so that $w^{n-1} x^n \in I$.  It follows that \[
(x/1)^n = \frac{x^n}{1} = \frac{1}{w^{n-1}} \cdot \frac{w^{n-1}x^n}{1} \in \frac{1}{w^{n-1}}IQ = IQ,
\]
since $w^{n-1}/1$ is a unit of $Q$.  Thus $x/1 \in (IQ)^\rad$, whence $x \in (IQ)^\rad \cap R$.

Conversely, let $x \in (IQ)^\rad \cap R$.  Then for some $n \in \N$, we have $x^n/1 \in IQ$.  Without loss of generality, we may choose $n \geq 2$.  But then there is some $w\in W$ such that $wx^n \in I$.  Multiplying by $w^{n-1}$, we have $(wx)^n = w^n x^n \in w^{n-1}I \subseteq wI$, which means that $wx \in (wI)^\rad$, whence $x \in ((wI)^\rad :_R w) \subseteq I^{\rad_\rs}$, as was to be shown.
\end{proof}

We immediately obtain the following:

\begin{cor}\label{cor:radns}
Let $R$ be a commutative ring.  Then $\rad_\rs = \rad$ if and only if all non-zerodivisors of $R$ are units.  Indeed, for any ideal $I$ that contains a non-zerodivisor, $I^{\rad_\rs} = R$.
\end{cor}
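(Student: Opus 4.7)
The plan is to derive both statements directly from the characterization $I^{\rad_\rs} = (IQ)^\rad \cap R$ given by Theorem~\ref{thm:rads}.

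First I would prove the ``Indeed'' clause, since it is the main computational input. Suppose $I$ contains a non-zerodivisor $w$. Then $w \in IQ$, and $w$ is a unit of $Q$ (every non-zerodivisor of $R$ becomes a unit in the total ring of fractions). Hence $1 = w^{-1} \cdot w \in IQ$, so $IQ = Q$ and therefore $(IQ)^\rad = Q$. Intersecting with $R$ gives $I^{\rad_\rs} = (IQ)^\rad \cap R = R$.

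Next I would handle the biconditional. For the forward direction, assume every non-zerodivisor of $R$ is already a unit. Then the localization map $R \to Q$ is an isomorphism, i.e.\ $Q = R$, so for every ideal $I$ we have $I^{\rad_\rs} = (IQ)^\rad \cap R = I^\rad$, giving $\rad_\rs = \rad$. For the converse, suppose some non-zerodivisor $w \in R$ fails to be a unit, and consider the principal ideal $I = (w)$. By the ``Indeed'' clause just established, $I^{\rad_\rs} = R$. On the other hand, $1 \notin I^\rad$: if $1 = 1^n \in (w)$ for some $n$, then $w$ would be a unit, contradicting our choice. Hence $I^\rad \subsetneq R = I^{\rad_\rs}$, so $\rad_\rs \neq \rad$.

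There is no substantive obstacle here; the only thing to be careful about is recording that non-zerodivisors of $R$ are automatically units of $Q$ (so that the step $1 \in IQ$ is legitimate) and noting that $Q \cap R = R$ inside $Q$. Both are immediate from the construction of the total ring of fractions.
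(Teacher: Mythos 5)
Your proof is correct and follows exactly the route the paper intends: the corollary is stated as an immediate consequence of Theorem~\ref{thm:rads} with no proof written out, and your argument simply supplies the (easy) details the author suppressed, namely that a non-zerodivisor $w \in I$ becomes a unit of $Q$ forcing $IQ = Q$, and that $Q = R$ precisely when all non-zerodivisors are units. No discrepancy.
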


Next we show what happens with primary ideals and primary decomposition.  First, note the following fact about intersections:

\begin{cor}\label{cor:radscap}
Let $R$ be a commutative ring and $I_1, \dotsc, I_n$ ideals of $R$.  Then $(I_1 \cap \cdots \cap I_n)^{\rad_\rs} = I_1^{\rad_\rs} \cap \cdots \cap  I_n^{\rad_\rs}$.
\end{cor}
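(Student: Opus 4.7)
The plan is to reduce everything to Theorem~\ref{thm:rads} and then invoke two well-known commutativity properties of finite intersections.

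First, by Theorem~\ref{thm:rads}, for any ideal $J$ of $R$ we have $J^{\rad_\rs} = (JQ)^\rad \cap R$. So the desired identity becomes
\[
\bigl((I_1 \cap \cdots \cap I_n)Q\bigr)^\rad \cap R \;=\; \bigcap_{i=1}^n \bigl((I_i Q)^\rad \cap R\bigr).
\]
Intersection with $R$ trivially commutes with intersection, so it will suffice to show
\[
\bigl((I_1 \cap \cdots \cap I_n)Q\bigr)^\rad \;=\; \bigcap_{i=1}^n (I_i Q)^\rad.
\]

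Second, I would use two standard facts. (a) Localization at $\nzd(R)$ is flat, so extension to $Q$ commutes with finite intersections of ideals: $(I_1 \cap \cdots \cap I_n)Q = (I_1 Q) \cap \cdots \cap (I_n Q)$. (b) The radical commutes with finite intersections: $(K_1 \cap \cdots \cap K_n)^\rad = K_1^\rad \cap \cdots \cap K_n^\rad$, which is immediate from the fact that $x^m \in K_i$ for all $i$ iff $x^{\max_i m_i} \in \bigcap_i K_i$ once one has $x^{m_i} \in K_i$ individually. Chaining (a) and (b), applied with $K_i := I_i Q$, gives the displayed identity above, completing the proof.

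The only potential subtlety is the localization-vs-intersection step, since this is sometimes stated only for Noetherian modules or submodules of a fixed module; here both sides are submodules of $Q$, and flatness of $R \to Q$ gives the statement in full generality for finite intersections. Thus I do not expect any real obstacle; the argument is a short assembly of Theorem~\ref{thm:rads} with these two routine facts. It is worth noting that the one-sided inclusion $(I_1 \cap \cdots \cap I_n)^{\rad_\rs} \subseteq \bigcap_i I_i^{\rad_\rs}$ is already forced by order-preservation of $\rad_\rs$ (Proposition on $\rc_\rs$), so the real content lies in the reverse inclusion, which is exactly what the flatness step provides.
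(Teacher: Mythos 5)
Your proof is correct and follows the paper's proof essentially verbatim: apply Theorem~\ref{thm:rads} to rewrite each $I_j^{\rad_\rs}$ as $(I_jQ)^\rad \cap R$, then commute the finite intersection past the extension to $Q$ and past the radical. The only difference is that you spell out the justification (flatness of $R \to Q$ for the extension step, the elementary $\max_i m_i$ argument for the radical step) while the paper performs the key identity $\bigcap_j (I_jQ)^\rad = \bigl(\bigl(\bigcap_j I_j\bigr)Q\bigr)^\rad$ in a single unexplained step of its chain of equalities.
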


\begin{proof}
Let $Q$ be the total quotient ring of $R$.  We have \begin{align*}
\bigcap_{j=1}^n I_j^{\rad_\rs} &= \bigcap_{j=1}^n \left((I_j Q)^\rad \cap R\right) = \Big(\bigcap_{j=1}^n (I_j Q)^\rad\Big) \cap R  \\
&= \Big(\bigcap_{j=1}^n (I_j Q)^\rad\Big) \cap R = \Big(\big(\bigcap_{j=1}^n I_j\big)Q\Big)^\rad \cap R = \left(\cap_{j=1}^n I_j\right)^{\rad_\rs}. \qedhere
\end{align*}
\end{proof}

\begin{prop}\label{pr:radsprimary}
Let $R$ be a commutative ring, and let $I$ be a \emph{primary} ideal.  Then \[
I^{\rad_\rs} = \begin{cases}
I^\rad &\text{if all elements of $I$ are zero-divisors of $R$,}\\
R &\text{otherwise.} 
\end{cases}
\]
\end{prop}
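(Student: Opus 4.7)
The plan is to reduce everything to Theorem~\ref{thm:rads}, which characterizes $I^{\rad_\rs}$ as $(IQ)^\rad \cap R$, and then split on the two cases.

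The case where $I$ contains a non-zerodivisor is immediate from Corollary~\ref{cor:radns}: that corollary already says $I^{\rad_\rs} = R$ whenever $I$ contains a non-zerodivisor, regardless of whether $I$ is primary.

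For the interesting case, assume every element of $I$ is a zero-divisor. The inclusion $I^\rad \subseteq I^{\rad_\rs}$ is essentially free: $\rad$ is a weakly prime closure operation, so by Proposition~\ref{it:cs}, $\rad \leq \rad_\rs$, hence $I^\rad \subseteq I^{\rad_\rs}$. The content is the reverse inclusion $I^{\rad_\rs} \subseteq I^\rad$. Take $x \in I^{\rad_\rs} = (IQ)^\rad \cap R$, so $x^n/1 \in IQ$ for some $n \geq 1$. Clearing denominators in $IQ$ (which is possible since any finite sum $\sum i_k/w_k$ with $i_k \in I$, $w_k \in \nzd(R)$ can be rewritten with common denominator in $\nzd(R)$), I can write $x^n/1 = i/w$ in $Q$ for some $i \in I$ and $w \in \nzd(R)$. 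By the definition of equality in the localization $Q = R_W$, there exists $v \in \nzd(R)$ with $v w x^n = v i$, and in particular $vw x^n \in I$.

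Now the key step: since $I$ is primary and $vw x^n \in I$, either $vw \in I$ or $x^n \in I^\rad$ (equivalently, $x \in I^\rad$). But $v$ and $w$ are both non-zerodivisors, so $vw \in \nzd(R)$; the assumption that every element of $I$ is a zero-divisor forbids $vw \in I$. Hence $x \in I^\rad$, completing the reverse inclusion.

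The main obstacle I anticipate is purely bookkeeping: one must be careful that ``$x^n/1 \in IQ$'' does not directly give $w x^n \in I$, because equalities in the total quotient ring carry an extra zero-divisor-killing factor $v$. Once the element $vw \in \nzd(R)$ is pinned down and primariness is applied, the zero-divisor hypothesis cleanly rules out the wrong alternative and delivers the conclusion.
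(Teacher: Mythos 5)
Your proof is correct and uses the same key idea as the paper's: produce a relation of the form (non-zerodivisor)$\cdot x^n \in I$ and use primariness together with the zero-divisor hypothesis to force $x \in I^\rad$. The only cosmetic difference is that you route through Theorem~\ref{thm:rads} (writing $I^{\rad_\rs} = (IQ)^\rad \cap R$ and clearing denominators), whereas the paper works directly from the definition $I^{\rad_\rs} = \bigcup_{w}\bigl((wI)^\rad : w\bigr)$ to obtain $w^{n-1}x^n \in I$ immediately; both reduce to the identical primariness argument.
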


\begin{proof}
If $I$ contains a non-zerodivisor, Corollary~\ref{cor:radns} shows $I^{\rad_\rs} = R$.

So assume that all elements of $I$ are zero-divisors. Let $a\in ((wI)^\rad : w)$, where $w$ is a non-zerodivisor, so that $wa \in (wI)^\rad$.  Then there is some $n$ with $w^n a^n \in wI$, so that $w^{n-1} a^n \in I$.  But since $w^{n-1} \notin I$ and $I$ is primary, it follows that some power of $a^n$ is in $I$, whence $a \in I^\rad$.  Thus, \[
I^\rad \subseteq I^{\rad_\rs} = \bigcup_{w\in \nzd(R)} ((wI)^\rad :w) \subseteq I^\rad,
\]
so that all are equal.
\end{proof}

The following then gives a way to compute the standardized radical of any ideal of a Noetherian ring, without changing rings to the total ring of fractions.
\begin{thm}\label{thm:radspdec}
Let $R$ be a commutative ring. Let $I$ be an ideal that has a primary decomposition.  Represent $I$ as follows: \[
I = \q_1 \cap \cdots \cap \q_n \cap \q_{n+1} \cap \cdots \cap \q_k,
\]
where each of $\q_1, \ldots \q_n$ are primary ideals consisting of zero-divisors, and each $\q_i$, $i>n$, contains a regular element.  Say each $\q_i$ is $\p_i$-primary, so that $\p_i = \q_i^\rad$.  Then \[
I^{\rad_\rs} = \p_1 \cap \cdots \cap \p_n.
\]
\end{thm}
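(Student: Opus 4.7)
The plan is to combine Corollary~\ref{cor:radscap} (which says $\rad_\rs$ commutes with finite intersections) with Proposition~\ref{pr:radsprimary} (which computes $\q^{\rad_\rs}$ for primary $\q$), so essentially no new work is needed.

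First I would apply Corollary~\ref{cor:radscap} to the given primary decomposition to get
\[
I^{\rad_\rs} = \q_1^{\rad_\rs} \cap \cdots \cap \q_n^{\rad_\rs} \cap \q_{n+1}^{\rad_\rs} \cap \cdots \cap \q_k^{\rad_\rs}.
\]
Then I would evaluate each factor using Proposition~\ref{pr:radsprimary}. For $i \leq n$, every element of $\q_i$ is a zero-divisor, so the proposition yields $\q_i^{\rad_\rs} = \q_i^\rad = \p_i$. For $i > n$, the ideal $\q_i$ contains a regular element, so the proposition (or equivalently Corollary~\ref{cor:radns}) yields $\q_i^{\rad_\rs} = R$. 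Substituting and discarding the trivial intersectands gives
\[
I^{\rad_\rs} = \p_1 \cap \cdots \cap \p_n \cap R \cap \cdots \cap R = \p_1 \cap \cdots \cap \p_n,
\]
as desired.

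There is no real obstacle here: the theorem is a direct consequence of the previous two results, and the only thing to notice is that the dichotomy ``primary ideal of zero-divisors versus primary ideal containing a regular element'' is precisely the dichotomy handled by Proposition~\ref{pr:radsprimary}. (It is worth remarking that the hypothesis that $I$ has a primary decomposition is used only to apply both preceding results finitely many times; no Noetherian assumption enters the proof itself.)
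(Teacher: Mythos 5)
Your proof is correct and follows exactly the same route as the paper: apply Corollary~\ref{cor:radscap} to split the intersection, then evaluate each $\q_i^{\rad_\rs}$ via Proposition~\ref{pr:radsprimary}. Nothing further is needed.
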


\begin{proof}
By Proposition~\ref{pr:radsprimary} and Corollary~\ref{cor:radscap}, we have \begin{align*}
I^{\rad_\rs} &= \left(\bigcap_{i=1}^k \q_i\right)^{\rad_\rs} = \bigcap_{i=1}^k \q_i^{\rad_\rs} = \left(\bigcap_{i=1}^n \q_i^{\rad_\rs}\right) \cap \left(\bigcap_{i=n+1}^k \q_i^{\rad_\rs}\right) \\
&=\left(\bigcap_{i=1}^n \q_i^\rad\right) \cap \left(\bigcap_{i=n+1}^k R\right) = \bigcap_{i=1}^n \p_i. \qedhere
\end{align*}
\end{proof}

\begin{example}
It can happen that a radical ideal consisting of zero-divisors has a nontrivial standardized radical.  That is, the assumption of primariness is necessary in Proposition~\ref{pr:radsprimary}.

Let $k$ be a field and let $R = k[X,Y,Z]/(X^2, XY) = k[x,y,z]$, where $X,Y,Z$ are indeterminates over $k$ and $x,y,z$ are their homomorphic images in $R$.  Let $I := (x, yz)$.  Then $I$ is a radical ideal, since $R/I \cong k[Y,Z]/(YZ)$ is a reduced ring.  Moreover, $I$ consists of zero-divisors, as $xI=0$.  However, $I^{\rad_\rs} = (x,y)$, as follows from Theorem~\ref{thm:radspdec}, since we have the primary decomposition $I = (x,y) \cap (x,z)$, but $(x,z)$ contains $z$, an $R$-regular element.
\end{example}

\section*{Acknowledgment}
The author would like to extend his thanks to the referee for useful comments that improved the paper.

\providecommand{\bysame}{\leavevmode\hbox to3em{\hrulefill}\thinspace}
\providecommand{\MR}{\relax\ifhmode\unskip\space\fi MR }
\providecommand{\MRhref}[2]{%
  \href{http://www.ams.org/mathscinet-getitem?mr=#1}{#2}
}
\providecommand{\href}[2]{#2}

\end{document}